\theoremstyle{plain}
\newtheorem{thm}{Theorem}[section]
\newtheorem*{main}{Main~Theorem}
\newtheorem{remark}{Remark}  
\newtheorem{corollary}[thm]{Corollary}
\newtheorem{lemma}[thm]{Lemma}
\renewcommand{\div}{\operatorname{div}}
\newcommand{\trace}{\operatorname{tr}}
\newcommand{\ip}[2]{\ensuremath{\langle #1 , #2 \rangle}}
\newcommand{\tp}{\texttt{p}}
\theoremstyle{definition}
\newtheorem{definition}{Definition}
\theoremstyle{remark}
\numberwithin{equation}{section}
\begin{document}
\title{The $\infty(x)$-equation in Riemannian Vector Fields}
\author{Thomas Bieske}
\address{Department of Mathematics \\
University of South Florida\\
Tampa, FL 33620, USA} 
\email{tbieske@mail.usf.edu}
\date{June 10, 2015}
\subjclass[2010]{35H20, 53C17,  49L25, 31B05, 31C12}
\keywords{Viscosity solutions, Riemannian vector fields, Infinite Laplacian}
\begin{abstract}
We employ Riemannian jets which are adapted to the Riemannian geometry to obtain the existence-uniqueness of viscosity solutions to the $\infty(x)$-Laplace equation in  Riemannian vector fields. Due to the differences between Euclidean jets and Riemannian jets, the Euclidean method of proof is not valid in this environment. 
\end{abstract}

\maketitle

\section{Introduction}  
Recently, the $\tp(x)$-Laplace equation and its limit equation, the $\infty(x)$-Laplace equation, have been the focus of much attention as a tool for exploring applications such as image restoration \cite{CLR} and electrorheological fluid flow \cite{R}. Linqvist and Luukari \cite{LL} recently proved existence-uniqueness of viscosity solutions to the $\infty(x)$-Laplace equation in (Euclidean) $\mathbb{R}^n$.  However, this proof is not valid in general Carnot- \\ Carath\'{e}odory spaces, such as Riemannian vector fields, because it relies on two important Euclidean properties, namely that the so-called viscosity penalty function is the square of the intrinsic distance and that the two first-order jet elements derived from the penalty function are equal. (These two phenomena are discussed more below.) The main result of this paper is that the lack of these phenomena in Riemannian vector fields can be overcome to produce existence-uniqueness of viscosity solutions in this environment. In particular, we prove the following theorem:
\begin{main}
    Let $\Omega$ be a bounded domain in $\mathbb{R}^n$ with Riemannian vector fields and let $f:\partial \Omega \to \mathbb{R}$ be a (Riemannian) Lipschitz function.
    Then the Dirichlet problem 
    \begin{eqnarray*}
\left\{ \begin{array}{cc}
-\Delta_{\mathfrak{X},\infty(x)} u  =  0  & \textmd{in}\  \Omega \\
u = f & \textmd{on}\ \partial \Omega
\end{array} \right.
\end{eqnarray*}
has a unique viscosity solution $u$. 
\end{main}

In Section 2, we review the main results and definitions from Riemannian vector fields. Section 3 is dedicated to existence-uniqueness of viscosity solutions and Section 4 details further properties of the viscosity solutions. 

\section{Riemannian Vector Fields}
\subsection{The Environment}
To create a Riemannian space, we begin with $\mathbb{R}^n$ and replace the Euclidean 
vector fields $\{\partial_{x_1}, \partial_{x_2},\ldots, \partial_{x_n}\}$
with an arbitrary collection of orthonormal vector fields or \textit{frame}
$$\mathfrak{X}=\{X_1,X_2,\ldots, X_n\}$$
consisting
of $n$ linearly independent smooth vector fields with the relation 
$$X_i(x)=\sum_{j=1}^n a_{ij}(x)\frac{\partial}{\partial x_j}$$
for some choice of smooth functions $a_{ij}(x)$.  Denote by $\mathbb{A}(x)$ the matrix whose
$(i,j)$-entry is $a_{ij}(x)$. We always assume that $\det(\mathbb{A}(x))\not=0$ in 
$\mathbb{R}^n$. 

The distance between points $x$ and $y$, denoted $d(x,y)$, is defined as the infimum of lengths of curves that join $x$ and $y$ with the additional requirement that the curves' tangent vectors lie in the span of the $X_i$'s. Using this distance, $\mathbb{R}^n$ with this frame is a metric space and,  unlike an arbitrary Carnot-Carath\'{e}odory space, this distance is locally comparable to Euclidean distance. We will discuss the importance of this fact below.

The natural gradient is the vector
$$D_{\mathfrak{X}}u=(X_1(u), X_2(u),\ldots, X_n(u))$$
and the natural second derivative is the $n\times n$ 
\textit{not necessarily symmetric} matrix  with entries $X_i(X_j(u))$. Because of the lack of symmetry, we introduce the symmetrized second-order derivative matrix with respect to this frame, given by $$(D^2_{\mathfrak{X}}u)^\star = \frac{1}{2}(D^2_{\mathfrak{X}}u + (D^2_{\mathfrak{X}}u)^t).$$

We can define function spaces $C^k$ and the Sobolev spaces $W^{1,p}$, etc with respect to this frame in the usual way. 

We may also define the $\infty$-Laplace operator
\begin{equation*}
\Delta_{\mathfrak{X},\infty} u  =
 \ip{(D^2_{\mathfrak{X}}u)^\star D_{\mathfrak{X}} u}{D_{\mathfrak{X}} u}.
\end{equation*}
This operator is the ``limit" operator of the $\tp$-Laplace operator (for $2< p<\infty$), which is given by 
\begin{eqnarray*}
\Delta_{\mathfrak{X},\tp} u & = &
 \|D_{\mathfrak{X}} u\|^{\tp-2}
\Delta_{\mathfrak{X}} u +(\tp-2) \|D_{\mathfrak{X}} u\|^{\tp-4}
 \Delta_{\mathfrak{X},\infty} u\\
& = &\div_{\mathfrak{X}} \ (\|D_{\mathfrak{X}} u\|^{\tp-2} D_{\mathfrak{X}} u)
\end{eqnarray*}
where the divergence is taken with respect to the frame $\mathfrak{X}$. 

Following \cite{LL}, we generalize these operators by replacing the constant $\tp$ with an appropriate function $\tp(x)\in C^1\cap W^{1,\infty}$ and scalar $k>1$ to obtain the $\tp(x)$-Laplace operator
\begin{eqnarray*}
\Delta_{\mathfrak{X},\tp(x)} u & = &
 \|D_{\mathfrak{X}} u\|^{k\tp(x)-2}
\Delta_{\mathfrak{X}} u +(k\tp(x)-2) \|D_{\mathfrak{X}} u\|^{k\tp(x)-4}
 \Delta_{\mathfrak{X},\infty} u\\
 & & \mbox{}+\|D_{\mathfrak{X}} u\|^{k\tp(x)-2}\ip{D_{\mathfrak{X}} u}{D_{\mathfrak{X}}k\tp(x)}\ln \|D_{\mathfrak{X}} u\| \\
& = &\div_{\mathfrak{X}} \ (\|D_{\mathfrak{X}} u\|^{k\tp(x)-2} D_{\mathfrak{X}} u).\end{eqnarray*}

The corresponding equation $\Delta_{\mathfrak{X},\tp(x)} u=0$ is the Euler-Lagrange equation associated to the energy functional $$\Bigg(\int_\Omega \frac{\|D_{\mathfrak{X}} u\|^{k\tp(x)}}{k\tp(x)}\;dx\Bigg)^\frac{1}{k}_.$$ Allowing $k\to\infty$, one has the tool for analysis of the extremal problem $$\min_u\max_x \|D_{\mathfrak{X}} u\|^{\tp(x)}.$$

Letting $k\to\infty$, we have $\Delta_{\mathfrak{X},\tp(x)} u \to \Delta_{\mathfrak{X},\infty(x)} u$ where 
\begin{equation*}
\Delta_{\mathfrak{X},\infty(x)} u =
\Delta_{\mathfrak{X},\infty} u + \|D_{\mathfrak{X}} u\|^{2}\ip{D_{\mathfrak{X}} u}{D_{\mathfrak{X}}\ln\tp(x)}\ln \|D_{\mathfrak{X}} u\|.
\end{equation*}

\subsection{Viscosity Solutions}
Because we will be considering viscosity solutions, we will recall the main definitions and properties. We begin with the Riemannian jets $J_{\mathfrak{X}}^{2,+}$ and $J_{\mathfrak{X}}^{2,-}$. 
(See \cite{BBM,BR} for a more complete analysis of such jets.) 
\begin{definition}
Let $u$ be an upper semi-continuous function. Consider the set
\begin{eqnarray*}
K_{\mathfrak{X}}^{2,+}u(x)  = \bigg\{
\varphi \in C^2\ \textmd{in a neighborhood of}\ x,  \varphi(x) = u(x),\\
\varphi(y) \geq u(y), \ y\neq x \ \text{in a neighborhood of}\ x\bigg\}.
\end{eqnarray*}
Each function $\varphi\in K_{\mathfrak{X}}^{2,+}u(x)$ determines a vector-matrix pair $(\eta,X)$ via the relations
\begin{equation*}\begin{array}{rcl}
\eta&  = & \big(X_1\varphi(x),X_2\varphi(x),\ldots, X_n\varphi(x)\big)\\
 X_{ij}&=& \frac{1}{2} \big(X_i(X_j(\varphi))(x)+X_j(X_i(\varphi))(x)\big).
\end{array}
\end{equation*}
We then define the \emph{second order superjet of $u$ at $x$} by $$J_{\mathfrak{X}}^{2,+}u(x)=\{(\eta,X):\varphi \in K^{2,+}u(x)\},$$
the \emph{second order subjet of $u$ at $x$} by $$J_{\mathfrak{X}}^{2,-}u(x)=-J_{\mathfrak{X}}^{2,+}(-u)(x)$$ and the set-theoretic closure 
\begin{eqnarray*}
\overline{J}_{\mathfrak{X}}^{2,+}u(x)=\{(\eta,X): & \exists \{x_n,\eta_n,X_n\}_{n\in \mathbb{N}}\ \textmd{with}\ (\eta_n,X_n)\in J_{\mathfrak{X}}^{2,+}u(x_n)\\ & \textmd{and}\ (x_n,u(x_n),\eta_n,X_n)\to(x,u(x),\eta,X)\}.
\end{eqnarray*}
\end{definition}

We then use these Riemannian jets to define viscosity $\infty(x)$-harmonic functions as follows:
\begin{definition}
A lower semi-continuous function  $v$  
  is \textbf{viscosity $\infty(x)$-superharmonic} in a bounded domain $\Omega$ if $v \not \equiv \infty$ in each component of $\Omega$ and for all $x_0 \in \Omega$,
whenever $(\xi, \mathcal{Y}) \in \overline{J}_{\mathfrak{X}}^{2,-} v(x_0)$, we have
$$-\Big(\ip{\mathcal{Y}\xi}{\xi} + \|\xi\|^{2}\ip{\xi}{D_{\mathfrak{X}}\ln\tp(x)}\ln \|\xi\| \Big)\geq 0.$$
An upper semi-continuous function  $u$  
  is  \textbf{viscosity $\infty(x)$-subharmonic} in a bounded domain $\Omega$ if $u \not \equiv -\infty$ in each component of $\Omega$ and for all $x_0 \in \Omega$,
whenever $(\eta,  \mathcal{X}) \in \overline{J}_{\mathfrak{X}}^{2,+} u(x_0)$, we have
$$-\Big(\ip{\mathcal{X}\eta}{\eta} + \|\eta\|^{2}\ip{\eta}{D_{\mathfrak{X}}\ln\tp(x)}\ln \|\eta\| \Big)\leq 0.$$
A function is \textbf{viscosity $\infty(x)$-harmonic} if it is both viscosity $\infty(x)$-subharmonic and viscosity $\infty(x)$-superharmonic.
\end{definition}

Similarly, we have the following definition concerning $\Delta_{\mathfrak{X},\tp(x)} u$.
\begin{definition}
A lower semi-continuous function  $v$  
  is \textbf{viscosity $\tp(x)$-superharmonic} in a bounded domain $\Omega$ if $v \not \equiv \infty$ in each component of $\Omega$ and for all $x_0 \in \Omega$,
whenever $(\xi, \mathcal{Y}) \in \overline{J}_{\mathfrak{X}}^{2,-} v(x_0)$, we have
$$-\Big(\|\xi\|^{k\tp(x)-2}\trace \mathcal{Y} +(k\tp(x)-2) \|\xi\|^{k\tp(x)-4}
 \ip{\mathcal{Y}\xi}{\xi}+\|\xi\|^{k\tp(x)-2}\ip{\xi}{D_{\mathfrak{X}}k\tp(x)}\ln \|\xi\| \Big)\geq 0.$$

An upper semi-continuous function  $u$ is  \textbf{viscosity $\tp(x)$-subharmonic} in a bounded domain $\Omega$ if $u \not \equiv -\infty$ in each component of $\Omega$ and for all $x_0 \in \Omega$,
whenever $(\eta,  \mathcal{X}) \in \overline{J}_{\mathfrak{X}}^{2,+} u(x_0)$, we have
$$-\Big(\|\eta\|^{k\tp(x)-2}\trace \mathcal{X} +(k\tp(x)-2) \|\eta\|^{k\tp(x)-4}
 \ip{\mathcal{X}\eta}{\eta}+\|\eta\|^{k\tp(x)-2}\ip{\eta}{D_{\mathfrak{X}}k\tp(x)}\ln \|\eta\|\Big)\leq 0.$$
A function is \textbf{viscosity $\tp(x)$-harmonic} if it is both viscosity $\tp(x)$-subharmonic and viscosity $\tp(x)$-superharmonic.
\end{definition}
\begin{remark}
In the above definitions, we may replace the right-hand side of each inequality by an arbitrary function. In that case, we use the term viscosity $\infty(x)$-subsolution, etc. 
\end{remark}
Our main tool is the Riemannian Maximum Principle \cite{BBM}, which we include for completeness. 
\begin{thm}\textbf{Riemannian Maximum Principle}\label{maxprin}
Let $u$ be  upper semicontinuous  in a bounded domain
$\Omega \subset \mathbb{R}^n$. Let $v$  be lower semicontinuous
in $\Omega$. Suppose that for    $x\in \partial \Omega$ we have
$$\limsup_{y\to x}u(y)\le \liminf_{y\to x}v(y),$$
where both sides are not $+\infty$ or $-\infty$ simultaneously.
If $u-v$ has a positive interior local maximum
$$
\sup_{\Omega} (u-v)  > 0$$ then we have: \\
For $\tau > 0$ we can find points $x_{\tau}, y_{\tau} \in\mathbb{R}^n$ 
such that
\begin{enumerate}
\item[i)]  $$\lim_{\tau\to\infty}\tau 
\psi(x_{\tau}, y_{\tau}) = 0,$$ where
$$\psi(x,y)=|x-y|^\alpha, $$for a fixed $\alpha \geq 2.$ (That is, $\psi$ is a power of the Euclidean distance.)
\item[ii)]  There exists a point $\hat  x \in\Omega$ such that $x_{\tau}\to \hat x$ (and so does $y_{\tau}$ by (i))
and 
$$\sup_{\Omega}(u-v) = u(\hat x) - v(\hat x) > 0,$$

\item[iii)] there exist symmetric matrices $\mathcal{X}_{\tau},
\mathcal{Y}_{\tau}$ and
vectors $\eta^+_{\tau} $, $\eta^-_\tau$ so that
\begin{enumerate}
\item[a)]$$(\eta^+_{\tau},\mathcal{X}_{\tau})\in \overline{J}_{\mathfrak{X}}^{2,+}u(x_{\tau}),$$
\item[b)]  $$(\eta^-_{\tau}, \mathcal{Y}_{\tau})  \in \overline{J}_{\mathfrak{X}}^{2,-}v(y_{\tau}),$$  
\item[c)] $$\eta^+_{\tau}- \eta^-_\tau=o(1)$$ and 
\item[d)]  $$ \mathcal{X}_{\tau} \leq \mathcal{Y}_{\tau} + o(1)$$ as
$\tau \to \infty$.  
\end{enumerate}
\end{enumerate}
\end{thm}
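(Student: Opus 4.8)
The plan is to reduce the statement to the Euclidean \emph{theorem on sums} (the Crandall--Ishii--Lions lemma) and then transport the resulting Euclidean jets to the frame $\mathfrak{X}$. First I would double the variables: on $\overline\Omega\times\overline\Omega$ set
$$w_\tau(x,y)=u(x)-v(y)-\tau\,\psi(x,y),\qquad \psi(x,y)=|x-y|^\alpha .$$
Since $u$ and $-v$ are upper semicontinuous and $\overline\Omega$ is compact, $w_\tau$ attains a maximum at some pair $(x_\tau,y_\tau)$. Comparing $w_\tau(x_\tau,y_\tau)$ with the value of $w_\tau$ on the diagonal at a near-maximizer of $u-v$ yields the standard penalization estimates: $\tau\psi(x_\tau,y_\tau)\to 0$, which is (i), and $|x_\tau-y_\tau|\to 0$. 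Note that $\alpha\ge 2$ guarantees $\psi\in C^2$, so the theorem on sums will apply, while the local comparability of the Riemannian and Euclidean distances makes $\psi$ an admissible penalty for the semicontinuous data.

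I would next show the maximizers are eventually interior. The boundary hypothesis $\limsup_{y\to x}u\le\liminf_{y\to x}v$ on $\partial\Omega$, together with $\sup_\Omega(u-v)>0$, prevents $x_\tau$ or $y_\tau$ from approaching $\partial\Omega$ for large $\tau$; passing to a subsequence and using upper semicontinuity of $u-v$ along the diagonal produces an interior limit $\hat x$ with $(u-v)(\hat x)=\sup_\Omega(u-v)>0$, giving (ii).

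The core of the argument is the passage to jets. Applying the Euclidean theorem on sums to $w_\tau$ at $(x_\tau,y_\tau)$ gives, for each $\epsilon>0$, symmetric matrices $X^e_\tau,Y^e_\tau$ and a single covector $p_\tau=\tau D_x\psi(x_\tau,y_\tau)$ --- a single covector because $\psi$ depends only on $x-y$, forcing $D_x\psi=-D_y\psi$ --- with $(p_\tau,X^e_\tau)$ a (Euclidean) superjet of $u$ at $x_\tau$, $(p_\tau,Y^e_\tau)$ a (Euclidean) subjet of $v$ at $y_\tau$, and
$$\begin{pmatrix}X^e_\tau & 0\\ 0 & -Y^e_\tau\end{pmatrix}\le \tau D^2\psi(x_\tau,y_\tau)+\epsilon\bigl(\tau D^2\psi(x_\tau,y_\tau)\bigr)^2 .$$
Using $X_i=\sum_j a_{ij}\partial_{x_j}$, a direct computation gives for any $\varphi\in C^2$ that $D_{\mathfrak{X}}\varphi=\mathbb{A}\,D\varphi$ and $(D^2_{\mathfrak{X}}\varphi)^\star=\mathbb{A}(D^2\varphi)\mathbb{A}^t+Q(\,\cdot\,,D\varphi)$, where $Q$ is a symmetric first-order term assembled from the $\partial_{x_l}a_{ij}$ paired with $D\varphi$. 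Applying this to the touching test functions, I set $\eta^\pm_\tau=\mathbb{A}(\cdot)p_\tau$ and $\mathcal X_\tau=\mathbb{A}(x_\tau)X^e_\tau\mathbb{A}(x_\tau)^t+Q(x_\tau,p_\tau)$, $\mathcal Y_\tau=\mathbb{A}(y_\tau)Y^e_\tau\mathbb{A}(y_\tau)^t+Q(y_\tau,p_\tau)$, which produces the intrinsic jets (a) and (b). Condition (c) is then immediate: $\eta^+_\tau-\eta^-_\tau=(\mathbb{A}(x_\tau)-\mathbb{A}(y_\tau))p_\tau$, and since $\mathbb{A}$ is smooth while $|p_\tau|\lesssim\tau|x_\tau-y_\tau|^{\alpha-1}$, one gets $|\eta^+_\tau-\eta^-_\tau|\lesssim|x_\tau-y_\tau|\cdot\tau|x_\tau-y_\tau|^{\alpha-1}=\tau\psi(x_\tau,y_\tau)\to 0$.

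The main obstacle is the intrinsic matrix inequality (d), where the frame conversion must be handled with care: unlike the Euclidean case, the two intrinsic gradients are only asymptotically equal and the Hessians carry the corrections $Q$. The right move is \emph{not} to compare $X^e_\tau$ and $Y^e_\tau$ directly, but to test the block inequality above against the transformed pair $\bigl(\mathbb{A}(x_\tau)^t\zeta,\ \mathbb{A}(y_\tau)^t\zeta\bigr)$ for an arbitrary vector $\zeta$; this is precisely $\langle\mathcal X_\tau\zeta,\zeta\rangle-\langle\mathcal Y_\tau\zeta,\zeta\rangle$ modulo the correction difference $\langle(Q(x_\tau,p_\tau)-Q(y_\tau,p_\tau))\zeta,\zeta\rangle$. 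Because $\psi$ depends only on $x-y$, the Hessian $D^2\psi$ has antidiagonal block structure, so the main contribution collapses to $\tau\langle B(\xi_x-\xi_y),\xi_x-\xi_y\rangle$ with $\xi_x-\xi_y=(\mathbb{A}(x_\tau)-\mathbb{A}(y_\tau))^t\zeta=O(|x_\tau-y_\tau|)$ and $\|B\|\lesssim|x_\tau-y_\tau|^{\alpha-2}$, hence is $O(\tau\psi)\to 0$; the correction difference is likewise $O(\tau\psi)$ by smoothness of the $a_{ij}$ and the bound on $|p_\tau|$; and the quadratic term is removed by letting $\epsilon\to 0$ along with $\tau$. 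Assembling these estimates gives $\mathcal X_\tau\le\mathcal Y_\tau+o(1)$, which is (d), and the theorem follows.
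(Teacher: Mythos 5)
Your proposal is correct and takes essentially the same route as the proof the paper relies on (the theorem is quoted from \cite{BBM}): doubling of variables with the Euclidean penalty $\psi(x,y)=|x-y|^\alpha$, the Euclidean theorem on sums, and transport of the resulting Euclidean jets to the frame via $\eta^+_\tau=\tau\mathbb{A}(x_\tau)D_x\psi$, $\eta^-_\tau=-\tau\mathbb{A}(y_\tau)D_y\psi$ with the symmetric first-order correction coming from the derivatives of the $a_{ij}$ --- exactly the construction the paper itself invokes in the proof of Lemma \ref{help}. Your estimates for (c) and (d), using $\|\mathbb{A}(x_\tau)-\mathbb{A}(y_\tau)\|=O(|x_\tau-y_\tau|)$, the antidiagonal block structure of $D^2\psi$, and a choice of $\epsilon=\epsilon(\tau)$ to kill the quadratic term, are the standard ones and are sound.
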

We will also use the following corollary. 
\begin{corollary}\cite[Corollary 2.1]{BR}\label{maxprincor}
Let $u$ and $v$ be as in Theorem \ref{maxprin}, and in addition, let one of $u$ or $v$ be locally Lipschitz. Let $\alpha=2$.  Then, for the vectors $\eta^+_{\tau}$ and $\eta^-_{\tau}$ and the matrices 
$\mathcal{X}_{\tau}$ and $\mathcal{Y}_{\tau}$
as in the theorem, we have 
\begin{eqnarray*}
\|\eta^+_{\tau}\|^2-\|\eta^-_{\tau}\|^2 & = & o(1) \\
\textmd{and \ \ }  \ip{\mathcal{X}_{\tau}\eta^+_{\tau}}{\eta^+_{\tau}}-
\ip{\mathcal{Y}_{\tau}\eta^-_{\tau}}{\eta^-_{\tau}} & = & o(1).
\end{eqnarray*}
\end{corollary}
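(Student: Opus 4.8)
The plan is to read both asymptotic identities off the conclusions of Theorem~\ref{maxprin}, using the local Lipschitz hypothesis to bound every gradient and the refined matrix inequality from \cite{BBM} underlying the theorem to control the quadratic forms. Assume, without loss of generality, that $v$ is locally Lipschitz with constant $L$ near the common limit point $\hat x$ of part (ii). Since $(\eta^-_\tau,\mathcal{Y}_\tau)\in\overline{J}_{\mathfrak X}^{2,-}v(y_\tau)$ and a subjet gradient of a Lipschitz function cannot exceed its Lipschitz constant, $\|\eta^-_\tau\|\le L$ for all large $\tau$, and then (c) yields $\|\eta^+_\tau\|\le L+o(1)$ as well; both gradient sequences are therefore bounded. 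The first identity is now immediate: writing $\|\eta^+_\tau\|^2-\|\eta^-_\tau\|^2=\ip{\eta^+_\tau-\eta^-_\tau}{\eta^+_\tau+\eta^-_\tau}$ and applying Cauchy--Schwarz bounds the left side by $\|\eta^+_\tau-\eta^-_\tau\|\,(\|\eta^+_\tau\|+\|\eta^-_\tau\|)$, which is $o(1)\cdot O(1)=o(1)$ by (c) and the preceding bound.

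The second identity is the substantive one, and here (d) by itself is insufficient. Expanding termwise, $\ip{\mathcal{X}_\tau\eta^+_\tau}{\eta^+_\tau}-\ip{\mathcal{Y}_\tau\eta^-_\tau}{\eta^-_\tau}=\ip{\mathcal{Y}_\tau(\eta^+_\tau-\eta^-_\tau)}{\eta^+_\tau+\eta^-_\tau}+o(1)$, and since the matrices produced by the penalization $\tau|x-y|^2$ grow like $\tau$ while $\eta^+_\tau-\eta^-_\tau$ decays only like $\tau^{-1}$, the crude product $\|\mathcal{Y}_\tau\|\,\|\eta^+_\tau-\eta^-_\tau\|$ is merely $O(1)$. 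Instead I would return to the full block matrix inequality behind Theorem~\ref{maxprin}: with $\alpha=2$ it has the form
\[
\begin{pmatrix}\mathcal{X}_\tau & 0\\ 0 & -\mathcal{Y}_\tau\end{pmatrix}\ \le\ C\tau\begin{pmatrix} I & -I\\ -I & I\end{pmatrix}+o(1).
\]
Testing this inequality against the single stacked vector $(\eta^+_\tau,\eta^-_\tau)$ keeps the two forms paired and gives $\ip{\mathcal{X}_\tau\eta^+_\tau}{\eta^+_\tau}-\ip{\mathcal{Y}_\tau\eta^-_\tau}{\eta^-_\tau}\le C\tau\|\eta^+_\tau-\eta^-_\tau\|^2+o(1)$.

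It remains to supply the rate. At the doubling maximum the two jet gradients come from the same penalty measured in the two frames, so $\eta^+_\tau=\mathbb{A}(x_\tau)p_\tau$ and $\eta^-_\tau=\mathbb{A}(y_\tau)p_\tau$ with $p_\tau=2\tau(x_\tau-y_\tau)$; the bound $\|\eta^-_\tau\|\le L$ together with the invertibility of $\mathbb{A}$ near $\hat x$ forces $\|p_\tau\|=O(1)$, and the smoothness of $\mathbb{A}$ then gives $\|\eta^+_\tau-\eta^-_\tau\|\le\|\mathbb{A}(x_\tau)-\mathbb{A}(y_\tau)\|\,\|p_\tau\|=O(|x_\tau-y_\tau|)$. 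Feeding this into the previous line and invoking part (i), namely $\tau|x_\tau-y_\tau|^2\to0$, I obtain $C\tau\|\eta^+_\tau-\eta^-_\tau\|^2=O(\tau|x_\tau-y_\tau|^2)=o(1)$, so that $\ip{\mathcal{X}_\tau\eta^+_\tau}{\eta^+_\tau}-\ip{\mathcal{Y}_\tau\eta^-_\tau}{\eta^-_\tau}\le o(1)$. The matching lower bound, which upgrades this to the stated equality, is obtained by the same reasoning applied to the companion estimate of the construction in \cite{BBM}, with the identical rate control.

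I expect the main obstacle to be precisely this $O(\tau)$-versus-$O(\tau^{-1})$ tension. In Euclidean space the argument of \cite{LL} sidesteps it entirely, because there one has $\eta^+_\tau=\eta^-_\tau$: the difference vanishes identically, the paired quadratic form is controlled trivially, and both gradient terms agree exactly. In Riemannian vector fields the two frames $\mathbb{A}(x_\tau)$ and $\mathbb{A}(y_\tau)$ no longer coincide, so $\eta^+_\tau\ne\eta^-_\tau$ and one cannot separate the matrices from the gradients without losing all decay. The essential point of the proof is therefore to keep the estimate as a single quadratic form in $\eta^+_\tau-\eta^-_\tau$, so that the matrix inequality contributes only a factor $\tau$, and then to defeat that factor using both the Lipschitz hypothesis (which bounds $p_\tau$, hence makes $\|\eta^+_\tau-\eta^-_\tau\|=O(|x_\tau-y_\tau|)$) and part (i) (the local comparability of $d$ to Euclidean distance, which makes $\tau|x_\tau-y_\tau|^2\to0$).
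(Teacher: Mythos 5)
Your upper-bound half reconstructs exactly the mechanism behind the cited proof (this paper defers the proof to \cite[Corollary 2.1]{BR}, but the ingredients it reveals in the proof of Lemma \ref{help} are precisely yours): the Lipschitz hypothesis plus the local comparability of $d$ to Euclidean distance give $\tau|x_\tau-y_\tau|\le C$, hence $p_\tau=2\tau(x_\tau-y_\tau)$ is bounded; the jet vectors are the twisted gradients $\eta^+_\tau=\tau\mathbb{A}(x_\tau)D_x\psi$, $\eta^-_\tau=-\tau\mathbb{A}(y_\tau)D_y\psi$ of the \emph{same} Euclidean penalty, so smoothness of $\mathbb{A}$ yields $\|\eta^+_\tau-\eta^-_\tau\|=O(|x_\tau-y_\tau|)$; and the paired test of the theorem-of-sums block inequality defeats the factor $\tau$ via part (i), since $\tau\|\eta^+_\tau-\eta^-_\tau\|^2=O(\tau|x_\tau-y_\tau|^2)\to 0$. (Two small corrections: part (i) is the decay $\tau\psi(x_\tau,y_\tau)\to 0$, not the distance comparability, which enters only in the Lipschitz step; and your displayed matrix inequality is not literally what the Riemannian construction provides. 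The Riemannian matrices are $\mathcal{X}_\tau=\mathbb{A}(x_\tau)X^e_\tau\mathbb{A}(x_\tau)^t+B(x_\tau,p_\tau)$, symmetrized, with $B$ linear in the Euclidean gradient, so the clean block bound holds only for the Euclidean pair $(X^e_\tau,Y^e_\tau)$ and must be tested with the $\mathbb{A}^t$-twisted vectors; the first-order corrections $B(x_\tau,p_\tau)-B(y_\tau,p_\tau)=O(|x_\tau-y_\tau|\,\|p_\tau\|)$ are $o(1)$ only after pairing with the specific bounded vectors $\eta^\pm_\tau$. This is reparable without changing your estimates.)

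The genuine gap is your last sentence: the ``matching lower bound'' does not follow from ``the same reasoning applied to the companion estimate,'' because no companion paired estimate exists. The theorem of sums gives the one-sided block bound $\operatorname{diag}(X^e_\tau,-Y^e_\tau)\le C\tau\left(\begin{smallmatrix} I & -I\\ -I & I\end{smallmatrix}\right)$ together with a \emph{diagonal} lower bound of the form $-\big(\tau+\|A\|\big)I$, and testing the latter against $(\eta^+_\tau,\eta^-_\tau)$ produces only $-C\tau(\|\eta^+_\tau\|^2+\|\eta^-_\tau\|^2)$, which blows up; swapping the slots or the roles of $u$ and $-v$ reproduces the same upper-type inequality with the same sign and, in any case, may change the matrices, whereas the corollary concerns the specific $\mathcal{X}_\tau,\mathcal{Y}_\tau$ from the theorem. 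Splitting $\ip{\mathcal{Y}_\tau\eta^-_\tau}{\eta^-_\tau}-\ip{\mathcal{X}_\tau\eta^+_\tau}{\eta^+_\tau}$ termwise fails for the reason you yourself identify ($\|\mathcal{Y}_\tau\|=O(\tau)$ against $\|\eta^+_\tau-\eta^-_\tau\|=O(\tau^{-1})$ yields only $O(1)$). So as written your argument proves $\ip{\mathcal{X}_\tau\eta^+_\tau}{\eta^+_\tau}-\ip{\mathcal{Y}_\tau\eta^-_\tau}{\eta^-_\tau}\le o(1)$, not the stated two-sided $o(1)$; closing the gap requires extracting a genuinely two-sided paired estimate from the explicit construction in \cite{BBM}, which you have not done. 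It is worth noting that in this paper's application (Theorem \ref{compinf}) only the one-sided bound is used, since both entries of the $\max$ in Equation \eqref{contr} need only be bounded above; but the corollary as stated asserts equality, and that is the part your proposal waves at rather than proves.
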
 
This corollary is a consequence of the facts that the Riemannian distance is comparable to the Euclidean distance and that the choice of penalty function $\psi(x,y)$ is the square of the Euclidean distance. Note, however, that even though the vectors $\eta^+_{\tau}$ and $\eta^-_{\tau}$ are not necessarily equal, we can still produce key estimates. 

\section{Existence-Uniqueness of $\infty(x)$-harmonic functions}
Let $\Omega$ be a bounded domain in $\mathbb{R}^n$ and $f:\partial \Omega\to\mathbb{R}$ be a (Riemannian) Lipschitz function. 

We will first establish the existence of $\infty(x)$-harmonic functions using Jensen's  auxiliary equations \cite{Je:ULE}: 
\begin{equation*}
\min \{\|D_{\mathfrak{X}} u\|^2 - \varepsilon, - \Delta_{\mathfrak{X},\infty(x)} u \}  =  0 
\ \ \textmd{and}\ \ \max \{\varepsilon -\|D_{\mathfrak{X}} u\|^2 , - \Delta_{\mathfrak{X},\infty(x)} u \} = 0
\end{equation*}
for a real parameter $\varepsilon>0$ . 
The procedure for existence of viscosity solutions to these equations (and viscosity $\infty(x)$-harmonic functions)  is identical to \cite[Section 4]{B:HG} and \cite[Section 2]{LL}, up to the obvious modifications. For completeness, we state the steps as one theorem and omit the proofs. 
\begin{thm}\cite{LL, B:HG}\label{exist}
We have the following results:
\begin{enumerate}
\item Let $\varepsilon\in\mathbb{R}$. If $u_k$ is a continuous potential-theoretic weak sub-(super-)solution with $u\in W^{1,k\tp(x)}(\Omega)$ to:
   \begin{eqnarray*}
\left\{ \begin{array}{cc}
-\Delta_{\mathfrak{X},k\tp(x)} u_k  =  \varepsilon^{k\tp(x)-1}  & \textmd{in}\  \Omega \\
u = f & \textmd{on}\ \partial \Omega
\end{array} \right.
\end{eqnarray*}
then it is a viscosity sub-(super-)solution.
\item Letting $k\to\infty$, we have $u_k\to u_\infty$ uniformly (possibly up to a subsequence) in $\Omega$ with $u_\infty\in W^{1,\infty}(\Omega)\cap C(\overline{\Omega})$.
\item The function $u_\infty$ is a viscosity solution to 
\begin{eqnarray*}
\min \{\|D_{\mathfrak{X}} u_\infty\|^2 - \varepsilon, - \Delta_{\mathfrak{X},\infty(x)} u_\infty \} =0 & \textmd{when} & \varepsilon>0 \\
\max \{\varepsilon -\|D_{\mathfrak{X}} u_\infty\|^2 , - \Delta_{\mathfrak{X},\infty(x)} u_\infty \}  =  0 & \textmd{when} & \varepsilon<0 \\
- \Delta_{\mathfrak{X},\infty(x)} u_\infty =0  & \textmd{when} & \varepsilon=0. 
\end{eqnarray*}  
\end{enumerate}
\end{thm}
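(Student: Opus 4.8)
The plan is to prove the three assertions in order, following the template for the $\tp$- and $\infty$-Laplacian in \cite{B:HG,LL} and adapting each step to the frame $\mathfrak{X}$; the one genuinely new ingredient is the systematic use of the comparability between Riemannian and Euclidean distance from Section 2. For part (1) I would show by contradiction that a continuous weak subsolution $u_k$ is a viscosity subsolution, the supersolution case being symmetric. If the viscosity inequality fails at some $x_0$, there is $\varphi\in C^2$ touching $u_k$ from above at $x_0$ whose associated jet makes $-\Delta_{\mathfrak{X},k\tp(x)}\varphi(x_0)$ strictly exceed $\varepsilon^{k\tp(x_0)-1}$. Continuity of the $a_{ij}$, of $\tp(x)$, and of every quantity in $\Delta_{\mathfrak{X},k\tp(x)}$ propagates this strict inequality to a ball $B\ni x_0$, so there $\varphi$ is a classical, hence weak, strict supersolution while $u_k$ is a weak subsolution with $\varphi\ge u_k$ on $\partial B$. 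A comparison principle for weak sub/supersolutions of the $k\tp(x)$-Laplacian---available because $\det\mathbb{A}\neq0$ makes the operator uniformly subelliptic on compact sets---forces $\varphi>u_k$ in the interior, contradicting $\varphi(x_0)=u_k(x_0)$.

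For part (2) the crux is a Lipschitz bound on $u_k$ uniform in $k$. I would bound $\|u_k\|_{L^\infty}$ by $\max_{\partial\Omega}|f|$ via comparison, and then obtain a $k$-independent Riemannian Lipschitz estimate $\|D_{\mathfrak{X}}u_k\|_{L^\infty(\Omega)}\le C$ by comparison with cone-type functions built from the Riemannian distance $d(\cdot,\cdot)$, which is an admissible competitor and carries the boundary Lipschitz constant of $f$. Because $d$ is locally comparable to Euclidean distance, this Riemannian bound upgrades to Euclidean equicontinuity, so Arzel\`a--Ascoli extracts a subsequence converging uniformly on $\overline\Omega$ to some $u_\infty\in C(\overline\Omega)$; lower semicontinuity of the $W^{1,\infty}$ norm under uniform limits places $u_\infty\in W^{1,\infty}(\Omega)$.

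For part (3) I would pass to the limit by stability of viscosity solutions under uniform convergence. Given $\psi\in C^2$ touching $u_\infty$ from above at $x_0$ (made strict by a quadratic perturbation), uniform convergence yields $x_k\to x_0$ and $c_k\to0$ with $\psi+c_k$ touching $u_k$ from above at $x_k$, so the part-(1) inequality holds at the corresponding jet. Dividing by $(k\tp(x_k)-2)\,\|D_{\mathfrak{X}}\psi(x_k)\|^{k\tp(x_k)-4}$ and letting $k\to\infty$ annihilates the trace term and sends the left-hand side to $-\Delta_{\mathfrak{X},\infty(x)}\psi(x_0)$, while the right-hand side $\varepsilon^{k\tp(x_k)-1}$, compared against the gradient power, either becomes negligible or dominates according to whether $\|D_{\mathfrak{X}}\psi(x_0)\|^2$ exceeds $\varepsilon$. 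On the subsolution side this gives $-\Delta_{\mathfrak{X},\infty(x)}\psi(x_0)\le0$ wherever the gradient is large; on the supersolution side the same growth comparison additionally forces the lower bound $\|D_{\mathfrak{X}}\psi(x_0)\|^2\ge\varepsilon$, which is exactly the first entry of the $\min$. The sign of $\varepsilon$ selects between the $\min$- and $\max$-formulations, yielding the three stated equations; the degenerate case $D_{\mathfrak{X}}\psi(x_0)=0$ is disposed of directly from the definitions.

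The step I expect to be the main obstacle is the uniform gradient estimate of part (2): the Euclidean cone-comparison must be transported to the frame $\mathfrak{X}$, and it is precisely the local comparability of $d$ with the Euclidean distance---the same property underlying Corollary \ref{maxprincor}---that makes the $k$-independent Lipschitz bound, and hence the very existence of $u_\infty$, go through.
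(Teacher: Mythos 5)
First, note that the paper does not actually prove Theorem \ref{exist}: it states that the argument is ``identical to \cite[Section 4]{B:HG} and \cite[Section 2]{LL}, up to the obvious modifications'' and omits the proofs, so your proposal must be measured against that cited route. Your parts (1) and (3) do track it: weak-implies-viscosity by contradiction plus comparison on a small ball (though note two slips --- the operator is not ``uniformly subelliptic,'' since it degenerates where the gradient vanishes, and comparison instead rests on monotonicity of $\xi\mapsto\|\xi\|^{k\tp(x)-2}\xi$; also weak comparison gives only a non-strict inequality, so you need the standard adjustment of making the touching strict and lowering $\varphi$ by half its minimum gap on $\partial B$ before comparing), and in (3) the division by $(k\tp(x_k)-2)\|D_{\mathfrak{X}}\psi(x_k)\|^{k\tp(x_k)-4}$ with $k D_{\mathfrak{X}}\tp/(k\tp-2)\to D_{\mathfrak{X}}\ln\tp$ is exactly how the $\ip{D_{\mathfrak{X}}\psi}{D_{\mathfrak{X}}\ln\tp}\ln\|D_{\mathfrak{X}}\psi\|$ term and the $\min$/$\max$ trichotomy in $\varepsilon$ emerge.

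The genuine gap is in part (2), precisely where you predicted the main obstacle. Your plan rests on a $k$-independent bound $\|D_{\mathfrak{X}}u_k\|_{L^\infty(\Omega)}\le C$ obtained ``by comparison with cone-type functions built from the Riemannian distance.'' This fails twice over. Cones are not sub- or supersolutions of the $k\tp(x)$-Laplace equation: the exactness of cone functions breaks down for variable exponents already in the Euclidean case (the unavailability of comparison with cones is the whole reason \cite{LL} is delicate), and the inhomogeneous right-hand side $\varepsilon^{k\tp(x)-1}$ makes matters worse; so no such comparison function is at hand. Moreover, a uniform Lipschitz bound on the $u_k$ themselves is neither available nor needed --- it is not what the cited proofs establish. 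The argument of \cite{LL,B:HG} is variational: use a Lipschitz extension of $f$ (e.g.\ McShane--Whitney built from $d$) as a competitor to bound the energy $\int_\Omega \|D_{\mathfrak{X}}u_k\|^{k\tp(x)}\,dx$; apply H\"older's inequality to get, for each fixed $m$, a bound on $\|D_{\mathfrak{X}}u_k\|_{L^m(\Omega)}$ uniform in all large $k$; invoke the Morrey embedding (legitimate here since $\det\mathbb{A}(x)\neq 0$ makes $D_{\mathfrak{X}}$ locally comparable to the Euclidean gradient --- this is where the distance comparability you emphasize actually enters) to obtain uniform H\"older equicontinuity, not Lipschitz bounds; extract a uniformly convergent subsequence by Arzel\`a--Ascoli and a diagonal argument; and finally let $m\to\infty$ using weak-$*$ compactness and lower semicontinuity of the $L^m$ norms to conclude that the \emph{limit} $u_\infty$, alone, lies in $W^{1,\infty}(\Omega)\cap C(\overline\Omega)$. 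With part (2) repaired along these lines, the rest of your outline goes through.
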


In light of \cite[Lemma 5.6]{B:HG} and \cite[Lemma 2.2]{LL}, the Main Theorem follows from showing the uniqueness of viscosity solutions to the Jensen auxiliary equations. We will establish this result, and point out where we digress from the Euclidean proof. 

\begin{thm}\label{compinf}
Let $v=u_\infty$ be the viscosity solution from Theorem \ref{exist} to 
\begin{equation}\label{minequ}
\min \{\|D_{\mathfrak{X}} u\|^2 - \varepsilon, - \Delta_{\mathfrak{X},\infty(x)} u \} =0
\end{equation}
in a bounded domain $\Omega$. If $u$ is an upper semi-continuous viscosity subsolution to Equation \eqref{minequ} in $\Omega$ so that $u\leq v$ on $\partial \Omega$, then $u\leq v$ in $\Omega$. 
\end{thm}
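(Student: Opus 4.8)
My plan is to argue by contradiction via the Riemannian Maximum Principle, using the parameter $\varepsilon>0$ to extract a gradient non-degeneracy for $v$ that confines the subsolution test to the infinity-Laplacian branch, and then to control the resulting terms even though the two first-order jet elements no longer coincide.

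First I would suppose, for contradiction, that $\sup_{\Omega}(u-v)>0$. Since $u$ is upper semicontinuous, $v=u_\infty$ is continuous, and $u\le v$ on $\partial\Omega$, the boundary hypothesis $\limsup_{y\to x}u(y)\le\liminf_{y\to x}v(y)$ of Theorem \ref{maxprin} holds, and the supremum is a positive interior local maximum. Applying Theorem \ref{maxprin} with $\alpha=2$ yields points $x_\tau,y_\tau\to\hat x\in\Omega$ and jets $(\eta^+_\tau,\mathcal X_\tau)\in\overline J_{\mathfrak X}^{2,+}u(x_\tau)$, $(\eta^-_\tau,\mathcal Y_\tau)\in\overline J_{\mathfrak X}^{2,-}v(y_\tau)$ with $\eta^+_\tau-\eta^-_\tau=o(1)$ and $\mathcal X_\tau\le\mathcal Y_\tau+o(1)$. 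Because $v\in W^{1,\infty}(\Omega)$ is locally Lipschitz, Corollary \ref{maxprincor} additionally furnishes the two estimates $\|\eta^+_\tau\|^2-\|\eta^-_\tau\|^2=o(1)$ and $\ip{\mathcal X_\tau\eta^+_\tau}{\eta^+_\tau}-\ip{\mathcal Y_\tau\eta^-_\tau}{\eta^-_\tau}=o(1)$.

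Next I would exploit that $v$ is a viscosity solution, hence supersolution, of \eqref{minequ}: the condition $\min\{\|\eta^-_\tau\|^2-\varepsilon,\,-\Delta_{\mathfrak X,\infty(x)}\text{-part}\}\ge0$ forces \emph{both} $\|\eta^-_\tau\|^2\ge\varepsilon$ and $\ip{\mathcal Y_\tau\eta^-_\tau}{\eta^-_\tau}+\|\eta^-_\tau\|^{2}\ip{\eta^-_\tau}{D_{\mathfrak X}\ln\tp(y_\tau)}\ln\|\eta^-_\tau\|\le0$. The gradient bound is the whole reason for introducing $\varepsilon>0$: transported to $x_\tau$ via $\|\eta^+_\tau\|^2=\|\eta^-_\tau\|^2+o(1)\ge\varepsilon+o(1)$, together with the Lipschitz upper bound on $\eta^\pm_\tau$, it keeps $\|\eta^\pm_\tau\|$ in a compact subset of $(0,\infty)$, so $\ln\|\eta^\pm_\tau\|$ stays bounded and the logarithmic terms never degenerate. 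This non-degeneracy drives the subsolution condition for $u$ into its infinity-Laplacian alternative (up to $o(1)$), giving $\ip{\mathcal X_\tau\eta^+_\tau}{\eta^+_\tau}+\|\eta^+_\tau\|^{2}\ip{\eta^+_\tau}{D_{\mathfrak X}\ln\tp(x_\tau)}\ln\|\eta^+_\tau\|\ge0$.

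The decisive, genuinely non-Euclidean step is reconciling these two inequalities when $\eta^+_\tau\ne\eta^-_\tau$. In $\mathbb R^n$ the penalty forces the two first-order jet elements to coincide, so one simply pits $\mathcal X_\tau\le\mathcal Y_\tau$ against the two branch inequalities with a common vector. Here I must instead match each piece at $x_\tau$ to its counterpart at $y_\tau$ up to $o(1)$: the quadratic forms via the second estimate of Corollary \ref{maxprincor}, and the logarithmic terms $\|\eta^\pm_\tau\|^{2}\ip{\eta^\pm_\tau}{D_{\mathfrak X}\ln\tp(\cdot)}\ln\|\eta^\pm_\tau\|$ via $\eta^+_\tau-\eta^-_\tau=o(1)$, $\|\eta^+_\tau\|^2-\|\eta^-_\tau\|^2=o(1)$, the boundedness of $\ln\|\eta^\pm_\tau\|$ secured above, and the continuity of $D_{\mathfrak X}\ln\tp$ with $x_\tau,y_\tau\to\hat x$. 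Subtracting the supersolution inequality from the subsolution inequality then cancels every cross term, leaving the strict gradient bound $\|\eta^-_\tau\|^2\ge\varepsilon$ to force the sign contradiction as $\tau\to\infty$. I expect this reconciliation of the quadratic and logarithmic contributions under unequal gradients — not the maximum-principle setup — to be the main obstacle, precisely because the naive combination only yields matching to within $o(1)$; Corollary \ref{maxprincor} is exactly the tool that makes those error terms vanish fast enough to preserve the strict inequality inherited from $\varepsilon>0$.
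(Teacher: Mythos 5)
Your scaffolding (the penalty $\psi(x,y)=|x-y|^2$, Theorem \ref{maxprin}, Corollary \ref{maxprincor}, and the term-by-term matching of the quadratic and logarithmic pieces under $\eta^+_\tau\neq\eta^-_\tau$) coincides with the paper's, but there is a genuine gap at the very start: you work with $v$ as an ordinary (non-strict) supersolution, and then no contradiction survives the $o(1)$ errors. The paper's first move --- following \cite[Lemma 3.1]{LL} and \cite[Theorem 5.3]{B:HG}, one may assume WLOG that $v$ is a \emph{strict} viscosity supersolution, with a uniform margin $\mu>0$ --- is not decorative; it is what produces the contradiction. Without it, two of your steps break. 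First, your claim that the gradient non-degeneracy ``drives the subsolution condition into its infinity-Laplacian alternative'': from $\|\eta^-_\tau\|^2\geq\varepsilon$ and $\|\eta^+_\tau\|^2=\|\eta^-_\tau\|^2+o(1)$ you only get $\|\eta^+_\tau\|^2\geq\varepsilon-o(1)$, which does \emph{not} preclude $\|\eta^+_\tau\|^2-\varepsilon\leq 0$ (e.g.\ when $\|\eta^-_\tau\|^2\to\varepsilon$); in that case the subsolution inequality is realized on the gradient branch and yields no information about $\ip{\mathcal{X}_\tau\eta^+_\tau}{\eta^+_\tau}+\|\eta^+_\tau\|^2\ip{\eta^+_\tau}{D_{\mathfrak{X}}\ln\tp(x_\tau)}\ln\|\eta^+_\tau\|$. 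Second, and more fundamentally, even when both branch inequalities are available, subtracting the supersolution inequality from the subsolution inequality gives only that the difference of the two operator expressions is $\geq 0$, while your matching estimates show it is $o(1)$; the conclusion ``$0\leq o(1)$'' is not a contradiction. Nor can $\varepsilon>0$ supply the missing strictness: $\varepsilon$ appears identically in the sub- and supersolution conditions and cancels upon subtraction, so the ``strict inequality inherited from $\varepsilon>0$'' you invoke at the end does not exist.

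Once the strict-supersolution reduction is inserted, your architecture does close, as a slight variant of the paper's: strictness gives $\|\eta^-_\tau\|^2\geq\varepsilon+\mu$, hence $\|\eta^+_\tau\|^2>\varepsilon$ for large $\tau$, which legitimately forces the second branch of the subsolution condition, while the supersolution side carries the margin $-\mu$; the difference of the operator expressions is then $\geq\mu>0$, contradicting the $o(1)$ matching. The paper itself never identifies the active branch: it subtracts the two $\min$-inequalities directly, using $\min\{a,b\}-\min\{c,d\}\leq\max\{a-c,b-d\}$ to arrive at $0<\mu\leq\max\{\cdots\}$ in Equation \eqref{contr}, where both entries of the $\max$ tend to zero via Corollary \ref{maxprincor} and the decomposition $I+II+III+IV$. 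One smaller inaccuracy: the upper bound on $\|\eta^{\pm}_\tau\|$ does not come from a ``Lipschitz upper bound on $\eta^{\pm}_\tau$'' --- $u$ is merely upper semicontinuous --- but from the explicit jet representations $\eta^-_\tau=-\tau\mathbb{A}(y_\tau)D_y\psi(x_\tau,y_\tau)$ and $\eta^+_\tau=\tau\mathbb{A}(x_\tau)D_x\psi(x_\tau,y_\tau)$ together with the bound $\tau\,\psi(x_\tau,y_\tau)^{1/2}<C$, which uses only that $v$ is locally Lipschitz; this is exactly the content of Lemma \ref{help}.
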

\begin{proof}
Following \cite[Lemma 3.1]{LL} and \cite[Theorem 5.3]{B:HG}, we may assume WLOG that $v$ is a strict viscosity supersolution. Suppose $$\sup_{\Omega} (u-v)>0$$ and let $\psi(x,y)=|x-y|^2$ be the square of the Euclidean distance between the points $x$ and $y$. By the Riemannian Maximum Principle (Theorem \ref{maxprin}), there are points $x_\tau$ and $y_\tau$ in $\Omega$ (for sufficiently large $\tau$) with the property that there are vectors $\eta^+_\tau, \eta^-_\tau$ and symmetric matrices $\mathcal{X}_\tau, \mathcal{Y}_\tau$ so that 
$$(\eta^+_\tau, \mathcal{X}_\tau)\in \overline{J}^{2,+}u(x_\tau)\ \ \textmd{and}\ \ 
(\eta^-_\tau, \mathcal{Y}_\tau)\in \overline{J}^{2,-}v(y_\tau).$$

Since $u$ is a viscosity subsolution and $v$ a strict viscosity supersolution, we have, for some $\mu>0$, 
\begin{eqnarray*}
0 & \geq & \min \{\|\eta^+_\tau\|^2 - \varepsilon, 
-\ip{\mathcal{X}_\tau\eta^+_\tau}{\eta^+_\tau} - \|\eta^+_\tau\|^{2}\ip{\eta^+_\tau}{D_{\mathfrak{X}}\ln\tp(x_\tau)}\ln \|\eta^+_\tau\|\}\\
0<\mu & \leq &  \min \{\|\eta^-_\tau\|^2 - \varepsilon, 
-\ip{\mathcal{Y}_\tau\eta^-_\tau}{\eta^-_\tau} - \|\eta^-_\tau\|^{2}\ip{\eta^-_\tau}{D_{\mathfrak{X}}\ln\tp(y_\tau)}\ln \|\eta^-_\tau\|\}.
\end{eqnarray*}

Subtracting these equations, we obtain
\begin{eqnarray}
0<\mu & \leq & \max \{\|\eta^-_\tau\|^2 - \|\eta^+_\tau\|^2, 
\ip{\mathcal{X}_\tau\eta^+_\tau}{\eta^+_\tau}-\ip{\mathcal{Y}_\tau\eta^-_\tau}
{\eta^-_\tau} \nonumber \\
 & &\hspace{.5in} \mbox{}+\|\eta^+_\tau\|^{2}\ip{\eta^+_\tau}{D_{\mathfrak{X}}\ln\tp(x_\tau)}\ln \|\eta^+_\tau\| \label{contr} \\ 
 & & \hspace{.5in}\mbox{}- \|\eta^-_\tau\|^{2}\ip{\eta^-_\tau}{D_{\mathfrak{X}}\ln\tp(y_\tau)}\ln \|\eta^-_\tau\|\}.\nonumber
\end{eqnarray}

Here is where the proof diverges from the Euclidean case. In the Euclidean case, the vectors $\eta^+_\tau$ and $\eta^-_\tau$ are equal, rapidly leading to a contradiction in Equation \eqref{contr} as $\tau\to\infty$. However, in the Riemannian environment, these vectors are not, in general, equal. So, we will have to estimate the right-hand side more carefully. 

Since $v$ is locally Lipschitz, can invoke Corollary \ref{maxprincor} to obtain 
$$\|\eta_{y_\tau}\|^2 - \|\eta_{x_\tau}\|^2 \to 0\ \ \textmd{and}\ \ 
\ip{\mathcal{X}_\tau\eta^+_\tau}{\eta^+_\tau}-\ip{\mathcal{Y}_\tau\eta^-_\tau}
{\eta^-_\tau}\to 0$$ as  $\tau\to\infty$. 

Thus, we are left to consider $$\|\eta^+_\tau\|^{2}\ip{\eta^+_\tau}{D_{\mathfrak{X}}\ln\tp(x_\tau)}\ln \|\eta^+_\tau\|- \|\eta^-_\tau\|^{2}\ip{\eta^-_\tau}{D_{\mathfrak{X}}\ln\tp(y_\tau)}\ln \|\eta^-_\tau\|.$$
We begin by expressing the sum as 
\begin{eqnarray*}
\lefteqn{\|\eta^+_\tau\|^{2}\ip{\eta^+_\tau}{D_{\mathfrak{X}}\ln\tp(x_\tau)}\ln \|\eta^+_\tau\|- \|\eta^-_\tau\|^{2}\ip{\eta^-_\tau}{D_{\mathfrak{X}}\ln\tp(y_\tau)}\ln \|\eta^-_\tau\|}&&\\  
& = & \|\eta^+_\tau\|^{2}\ip{\eta^+_\tau}{D_{\mathfrak{X}}\ln\tp(x_\tau)}\bigg(\ln \|\eta^+_\tau\|-\ln \|\eta^-_\tau\|\bigg) \\
& + &\bigg(\|\eta^+_\tau\|^{2}- \|\eta^-_\tau\|^{2}\bigg)\ip{\eta^+_\tau}{D_{\mathfrak{X}}\ln\tp(x_\tau)}\ln \|\eta^-_\tau\| \\
 & + & \|\eta^-_\tau\|^{2}\bigg(\ip{\eta^+_\tau}{D_{\mathfrak{X}}\ln\tp(x_\tau)}-
 \ip{\eta^-_\tau}{D_{\mathfrak{X}}\ln\tp(x_\tau)}\bigg)\ln \|\eta^-_\tau\|\\
 & + & \|\eta^-_\tau\|^{2}\bigg(\ip{\eta^-_\tau}{D_{\mathfrak{X}}\ln\tp(x_\tau)}-
\ip{\eta^-_\tau}{D_{\mathfrak{X}}\ln\tp(y_\tau)}\bigg)\ln \|\eta^-_\tau\| \\
  & = & I + II + III + IV.
\end{eqnarray*}
To estimate each term, we will need the following Lemma.
\begin{lemma}\label{help}
For some constant $K>0$, the vector $\eta^-_\tau$ satisfies $$\sqrt{\varepsilon} < \|\eta^-_\tau\| < K.$$  In addition, for that same $K$, and sufficiently large $\tau$, the vector $\eta^+_\tau$ satisfies 
$$\frac{1}{2}\sqrt{\varepsilon}<\|\eta^+_\tau\|<K.$$ 
As a consequence, there is a constant $L$ so that $$\Big|\ln \|\eta^-_\tau\|\Big|<L<\infty.$$ 
\end{lemma}
\begin{proof}
Since $v$ is a strict supersolution to Equation \eqref{minequ}, we have $0<\varepsilon<
\|\eta^-_\tau\|^2$.  Next, since $v$ is locally Lipschitz, the proof of Corollary \ref{maxprincor} shows there is a constant $C$  so that $$\tau \psi(x_\tau, y_\tau)^{\frac{1}{2}} < C.$$ By the proof of the Riemannian Maximum Principle (Theorem \ref{maxprin}), $$\eta^-_\tau=-\tau \mathbb{A}(y_\tau)D_y(\psi(x_\tau,y_\tau))$$ where $D_y$ denotes Euclidean differentiation in the $y$-variable and $\mathbb{A}(y_\tau)$ is the coefficient matrix of the frame at $y_\tau$ in terms of the standard Euclidean vectors (Section 2.1). 
Because $\psi(x_\tau,y_\tau)=|x_\tau-y_\tau|^2$ and $y_\tau \in \Omega$, a bounded domain, we conclude that for some finite constant $C_1$,
$$\|\eta^-_\tau\| \leq C_1.$$ Similarly, 
$$\eta^+_\tau=\tau \mathbb{A}(x_\tau)D_x(\psi(x_\tau,y_\tau))$$ so that 
for some finite constant $C_2$, $$\|\eta^+_\tau\| \leq C_2.$$ By part iii(c) of the Riemannian Maximum Principle (Theorem \ref{maxprin}), for sufficiently large $\tau$, 
$0<\frac{1}{2}\sqrt{\varepsilon}<\|\eta^+_\tau\|$.
The Lemma then follows. 
\end{proof}

\noindent\textbf{Term I:} The absolute value of the first term is controlled by 
$$\|\eta^+_\tau\|^{3}\|D_{\mathfrak{X}}\ln\tp(x_\tau)\|_{L^{\infty}}\bigg|\ln \|\eta^+_\tau\|-\ln \|\eta^-_\tau\|\bigg|.$$ Using Lemma \ref{help}  and the fact that $1<\tp(x)\in C^1(\Omega)\cap W^{1,\infty}(\Omega)$, we have that this is, in turn, controlled by 
 $$C\bigg(\ln \|\eta^+_\tau\|-\ln \|\eta^-_\tau\|\bigg)$$ for some finite constant $C$. Suppose that \begin{equation*}
\bigg|\ln \|\eta^+_\tau\|-\ln \|\eta^-_\tau\|\bigg|  =  \Big|\ln \bigg( \frac{ \|\eta^+_\tau\|}{ \|\eta^-_\tau\|}\bigg) \bigg|=\ln \bigg( \frac{ \|\eta^+_\tau\|}{ \|\eta^-_\tau\|}\bigg).
 \end{equation*}
Then 
\begin{equation*}
\bigg|\ln \|\eta^+_\tau\|-\ln \|\eta^-_\tau\|\bigg| \leq \ln \bigg(1+\frac{\|\eta^+_\tau-\eta^-_\tau\|}{\|\eta^-_\tau\|}\bigg).
 \end{equation*}
 Lemma \ref{help} and the Riemannian Maximum Principle then imply as $\tau\to\infty$, $$\bigg|\ln \|\eta^+_\tau\|-\ln \|\eta^-_\tau\|\bigg|\to 0.$$
 If instead, 
\begin{equation*}
\bigg|\ln \|\eta^+_\tau\|-\ln \|\eta^-_\tau\|\bigg|  =  \Big|\ln \bigg( \frac{ \|\eta^+_\tau\|}{ \|\eta^-_\tau\|}\bigg) \bigg|=-\ln \bigg( \frac{ \|\eta^+_\tau\|}{ \|\eta^-_\tau\|}\bigg)
=\ln \bigg( \frac{ \|\eta^-_\tau\|}{ \|\eta^+_\tau\|}\bigg)
 \end{equation*}
 then a symmetric argument gives $$\bigg|\ln \|\eta^+_\tau\|-\ln \|\eta^-_\tau\|\bigg|\to 0$$ as $\tau\to\infty$, so that 
 $$\|\eta^+_\tau\|^{2}\ip{\eta^+_\tau}{D_{\mathfrak{X}}\ln\tp(x_\tau)}\bigg(\ln \|\eta^+_\tau\|-\ln \|\eta^-_\tau\|\bigg)\to 0$$ as $\tau\to\infty$.  
 
\noindent\textbf{Term II:} By Corollary \ref{maxprincor}, $$\|\eta^+_\tau\|^{2}- \|\eta^-_\tau\|^{2}\to 0 \ \ \textmd{as}\ \  \tau\to\infty.$$  Using Lemma \ref{help}, we have 
$$\bigg|\ip{\eta^+_\tau}{D_{\mathfrak{X}}\ln\tp(x_\tau)}\ln \|\eta^-_\tau\|\bigg|
\leq C \|D_{\mathfrak{X}}\ln\tp(x_\tau)\|_{L^{\infty}}.$$ As in Term I, this term is bounded and so as $\tau\to\infty$
$$\bigg(\|\eta^+_\tau\|^{2}- \|\eta^-_\tau\|^{2}\bigg)\ip{\eta^+_\tau}{D_{\mathfrak{X}}\ln\tp(x_\tau)}\ln \|\eta^-_\tau\|\to 0.$$  

\noindent\textbf{Term III:} As in the previous terms, the absolute value of this term is controlled by $$C\|\eta^+_\tau-\eta^-_\tau\|.$$ By the Riemannian Maximum Principle, we then have as $\tau\to\infty$,
$$ \|\eta^-_\tau\|^{2}\bigg(\ip{\eta^+_\tau}{D_{\mathfrak{X}}\ln\tp(x_\tau)}-
 \ip{\eta^-_\tau}{D_{\mathfrak{X}}\ln\tp(x_\tau)}\bigg)\ln \|\eta^-_\tau\|\to 0.$$
 
\noindent\textbf{Term IV:}  By the Riemannian Maximum Principle, both $x_\tau$ and $y_\tau$ converge to a point $\hat{x}$. By the regularity of $\tp(x)$, we have the absolute value of Term IV is controlled by $$C\bigg(D_{\mathfrak{X}}\ln\tp(x_\tau)-D_{\mathfrak{X}}\ln\tp(y_\tau)\bigg)\to 0$$ as $\tau\to\infty$. 
\end{proof}
An analogous argument produces the following Corollary.
\begin{corollary}\label{compinfcor}
Let $v=u_\infty$ be the viscosity solution from Theorem \ref{exist} to 
\begin{equation}\label{maxequ}
\max \{\varepsilon-\|D_{\mathfrak{X}} u\|^2, - \Delta_{\mathfrak{X},\infty(x)} u \} =0
\end{equation}
in a bounded domain $\Omega$. If $u$ is an lower semi-continuous viscosity supersolution to Equation \eqref{maxequ} in $\Omega$ so that $u\geq v$ on $\partial \Omega$, then $u\geq v$ in $\Omega$. 
\end{corollary}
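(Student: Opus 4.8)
The plan is to mirror the proof of Theorem \ref{compinf}, interchanging the roles of subsolution and supersolution and replacing the min-equation \eqref{minequ} with the max-equation \eqref{maxequ}. As in \cite[Lemma 3.1]{LL} and \cite[Theorem 5.3]{B:HG}, I would first reduce to the case where $v=u_\infty$ is a \emph{strict} viscosity subsolution to \eqref{maxequ}, so that its superjet data satisfy, for some $\mu>0$, $$\max\Big\{\varepsilon-\|\eta^+_\tau\|^2,\,-\ip{\mathcal{X}_\tau\eta^+_\tau}{\eta^+_\tau}-\|\eta^+_\tau\|^2\ip{\eta^+_\tau}{D_{\mathfrak{X}}\ln\tp(x_\tau)}\ln\|\eta^+_\tau\|\Big\}\le-\mu.$$ Assuming for contradiction that $\sup_\Omega(v-u)>0$, I would apply the Riemannian Maximum Principle (Theorem \ref{maxprin}) to the pair $(v,u)$ — with $v$ upper semicontinuous on top and $u$ lower semicontinuous below, legitimized by the boundary inequality $v\le u$ on $\partial\Omega$ — to produce points $x_\tau,y_\tau$ together with jets $(\eta^+_\tau,\mathcal{X}_\tau)\in\overline{J}_{\mathfrak{X}}^{2,+}v(x_\tau)$ and $(\eta^-_\tau,\mathcal{Y}_\tau)\in\overline{J}_{\mathfrak{X}}^{2,-}u(y_\tau)$.

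Since $u$ is a viscosity supersolution to \eqref{maxequ}, its subjet data satisfy $\max\{\varepsilon-\|\eta^-_\tau\|^2,\,-\ip{\mathcal{Y}_\tau\eta^-_\tau}{\eta^-_\tau}-\|\eta^-_\tau\|^2\ip{\eta^-_\tau}{D_{\mathfrak{X}}\ln\tp(y_\tau)}\ln\|\eta^-_\tau\|\}\ge0$. The key algebraic step is the elementary inequality $\max\{a,b\}-\max\{c,d\}\le\max\{a-c,\,b-d\}$, which — unlike the min-case — absorbs the disjunctive (``or'') nature of the supersolution condition without our having to identify which slot is active. Subtracting the two viscosity inequalities through this estimate produces exactly the analog of Equation \eqref{contr}, except that the first slot now reads $\|\eta^+_\tau\|^2-\|\eta^-_\tau\|^2$ rather than $\|\eta^-_\tau\|^2-\|\eta^+_\tau\|^2$; this sign reversal is harmless, since Corollary \ref{maxprincor} controls that difference, as well as $\ip{\mathcal{X}_\tau\eta^+_\tau}{\eta^+_\tau}-\ip{\mathcal{Y}_\tau\eta^-_\tau}{\eta^-_\tau}$, in either direction.

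From here the argument proceeds essentially verbatim as in Theorem \ref{compinf}. Because $v=u_\infty$ is locally Lipschitz, Corollary \ref{maxprincor} applies (the Lipschitz function being the one on top this time) and forces the first two quantities to be $o(1)$. The analog of Lemma \ref{help} still holds, with the roles of $\eta^+_\tau$ and $\eta^-_\tau$ interchanged: the lower bound $\sqrt{\varepsilon}<\|\eta^+_\tau\|$ now comes from $v$ being a \emph{strict subsolution} (so that $\varepsilon-\|\eta^+_\tau\|^2\le-\mu$), while $\frac{1}{2}\sqrt{\varepsilon}<\|\eta^-_\tau\|$ for large $\tau$ follows from part iii(c) of Theorem \ref{maxprin}, and both norms remain bounded above via $\eta^\pm_\tau=\pm\tau\,\mathbb{A}(\cdot)D(\psi)$ on the bounded domain. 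Splitting the remaining logarithmic difference into the same four pieces $I+II+III+IV$ and bounding each exactly as before shows all of them tend to $0$, contradicting $\mu>0$.

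I expect the only genuine obstacle to be the same one that arose in the theorem, namely Term $I$, the quantity $\|\eta^+_\tau\|^2\ip{\eta^+_\tau}{D_{\mathfrak{X}}\ln\tp(x_\tau)}\big(\ln\|\eta^+_\tau\|-\ln\|\eta^-_\tau\|\big)$, whose control requires both the two-sided gradient bounds of the (role-swapped) Lemma \ref{help} and the Maximum-Principle estimate $\eta^+_\tau-\eta^-_\tau=o(1)$ in order to force $\big|\ln\|\eta^+_\tau\|-\ln\|\eta^-_\tau\|\big|\to0$. Everything else — Terms $II$ through $IV$, the gradient lemma, and the reduction to a strict subsolution — is routine bookkeeping once the signs in \eqref{maxequ} are tracked correctly.
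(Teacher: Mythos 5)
Your proposal is correct and takes exactly the approach the paper intends: the paper proves this corollary only by the remark that an argument analogous to Theorem \ref{compinf} applies, and your role-swapped version --- reduction to a strict subsolution, the elementary inequality $\max\{a,b\}-\max\{c,d\}\le\max\{a-c,\,b-d\}$ in place of its min-counterpart, the lower bound $\sqrt{\varepsilon}<\|\eta^+_\tau\|$ now coming from the strict subsolution slot with $\frac{1}{2}\sqrt{\varepsilon}<\|\eta^-_\tau\|$ recovered from part iii(c), and the same four-term decomposition $I+II+III+IV$ --- is precisely that analogous argument, with all signs and the sources of the gradient bounds tracked correctly.
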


\section{A Harnack Inequality}
We include a Harnack inequality for completeness. First, we have the following lemma whose proof is identical to \cite[Lemma 4.1]{LL} and omitted. 
\begin{lemma}
Let $u$ be a positive viscosity $\infty(x)$-harmonic function and $\zeta$ a positive, compactly supported smooth function. Then 
$$\sup_{x\in\Omega}\bigg|D_{\mathfrak{X}}\zeta(x)D_{\mathfrak{X}}\ln u(x)\bigg|^{\tp(x)} \leq \sup_{x_\in\Omega}\bigg|D_{\mathfrak{X}}\zeta(x)+\zeta(x)\ln\bigg(\frac{\zeta(x)}{u(x)}\bigg)D_{\mathfrak{X}}\ln \tp(x)\bigg|^{\tp(x)}.$$
\end{lemma}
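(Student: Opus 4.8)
The plan is to transcribe the proof of \cite[Lemma 4.1]{LL} into the frame $\mathfrak{X}$, and the first thing I would stress is \emph{why} this transcription is legitimate. The asserted inequality involves only the first-order operator $D_{\mathfrak{X}}$; no symmetrized Hessian, Riemannian jet, or matrix inequality enters. Consequently none of the phenomena that forced the detour in Theorem \ref{compinf}---the inequality of the two jet vectors and the delicate control of $\ip{\mathcal{X}_\tau\eta^+_\tau}{\eta^+_\tau}-\ip{\mathcal{Y}_\tau\eta^-_\tau}{\eta^-_\tau}$---are present. What is used instead is only that each $X_i$ is a first-order derivation, so that $X_i(\ln w)=X_i w/w$ and the Leibniz rule $X_i(\zeta g)=\zeta X_i g+g X_i\zeta$ hold in the frame exactly as in the Euclidean case. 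I would also record at the outset the identity $D_{\mathfrak{X}}\ln(k\tp(x))=D_{\mathfrak{X}}\ln\tp(x)$: the scalar $k$ disappears under the logarithmic derivative, which is precisely why $\ln\tp$, and not $\ln(k\tp)$, survives on the right-hand side of the Lemma.

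Concretely, I would argue through the variational approximants of Theorem \ref{exist}. For each $k$ let $u_k$ be the weak (hence, by Theorem \ref{exist}, viscosity) solution of the $k\tp(x)$-problem, so that $u_k\to u$ with $D_{\mathfrak{X}}u_k\to D_{\mathfrak{X}}u$ and, since $u>0$ is continuous and $\operatorname{supp}\zeta$ is compact, $u_k$ is bounded away from $0$ on $\operatorname{supp}\zeta$ for large $k$. I would test the weak formulation $\int_\Omega \|D_{\mathfrak{X}}u_k\|^{k\tp(x)-2}\ip{D_{\mathfrak{X}}u_k}{D_{\mathfrak{X}}\phi}\,dx=0$ against the logarithmic test function $\phi=\zeta^{k\tp(x)}u_k^{\,1-k\tp(x)}$, which is admissible. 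Expanding $D_{\mathfrak{X}}\phi$ by the Leibniz and chain rules, its leading contribution is proportional to $D_{\mathfrak{X}}\zeta-\zeta D_{\mathfrak{X}}\ln u_k+\zeta\ln(\zeta/u_k)D_{\mathfrak{X}}\ln\tp$, the scalar $k$ having been converted through $D_{\mathfrak{X}}(k\tp)=k\tp\,D_{\mathfrak{X}}\ln\tp$. Pairing this with $D_{\mathfrak{X}}u_k$, the coercive term $-\zeta D_{\mathfrak{X}}\ln u_k$ yields the left-hand integrand $\big(\zeta\|D_{\mathfrak{X}}\ln u_k\|\big)^{k\tp(x)}$, while Young's inequality bounds the remaining terms by the right-hand integrand $\big\|D_{\mathfrak{X}}\zeta+\zeta\ln(\zeta/u_k)D_{\mathfrak{X}}\ln\tp\big\|^{k\tp(x)}$; the resulting Caccioppoli-type modular inequality carries a multiplicative constant that becomes harmless below.

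From this modular inequality $\int_\Omega\big(\zeta\|D_{\mathfrak{X}}\ln u_k\|\big)^{k\tp(x)}dx\le C\int_\Omega\big\|D_{\mathfrak{X}}\zeta+\zeta\ln(\zeta/u_k)D_{\mathfrak{X}}\ln\tp\big\|^{k\tp(x)}dx$, I would rewrite each side as $\int_\Omega\big(G_k^{\tp(x)}\big)^k\,dx$, take $k$-th roots, and let $k\to\infty$. For a fixed bounded $G$ one has $\big(\int_\Omega G^k\,dx\big)^{1/k}\to\sup_\Omega G$ and $C^{1/k}\to1$; applied with $G_k=\big(\zeta\|D_{\mathfrak{X}}\ln u_k\|\big)^{\tp}$ on the left and $G_k=\big\|D_{\mathfrak{X}}\zeta+\zeta\ln(\zeta/u_k)D_{\mathfrak{X}}\ln\tp\big\|^{\tp}$ on the right, and using $u_k\to u$ (hence $\ln(\zeta/u_k)\to\ln(\zeta/u)$), this converts the modular inequality into the asserted $\sup\le\sup$.

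The step I expect to be the main obstacle is this joint limit: one must show that the normalized variable-exponent means converge to the corresponding suprema \emph{uniformly enough} to be taken simultaneously on both sides while the integrand itself is moving through $u_k\to u$. This forces quantitative control that I would draw from the hypotheses exactly where the paper has already used them: the bound $D_{\mathfrak{X}}\ln\tp\in L^\infty$ coming from $1<\tp\in C^1(\Omega)\cap W^{1,\infty}(\Omega)$ keeps the right-hand integrand bounded and nondegenerate on $\operatorname{supp}\zeta$, and the local comparability of the Riemannian and Euclidean distances---used for Corollary \ref{maxprincor}---guarantees that the frame gradient $D_{\mathfrak{X}}\zeta$ of the smooth cutoff is genuinely bounded there. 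Every other manipulation is a first-order identity that transfers verbatim from \cite[Lemma 4.1]{LL}.
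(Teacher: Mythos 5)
Your proposal is essentially the paper's own proof: the paper omits the argument entirely, stating only that it is ``identical to \cite[Lemma 4.1]{LL}'' transcribed to the frame $\mathfrak{X}$, and your reconstruction---approximation by the $k\tp(x)$-solutions $u_k$ of Theorem \ref{exist}, the logarithmic test function $\zeta^{k\tp(x)}u_k^{1-k\tp(x)}$, the Caccioppoli-type modular inequality, and the $k$-th-root passage to suprema---is precisely that proof, together with the correct observation that only first-order Leibniz/chain-rule identities (and $D_{\mathfrak{X}}\ln(k\tp)=D_{\mathfrak{X}}\ln\tp$) are needed, so the jet-related obstructions of Theorem \ref{compinf} never arise. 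The only points to tighten are routine and already present in \cite{LL}: identifying the limit of the $u_k$ with the given $u$ requires the uniqueness/comparison result of Section 3 (solving the $k\tp(x)$-problems on a subdomain containing $\operatorname{supp}\zeta$ with boundary data $u$), and on the left-hand side one has only weak convergence of $D_{\mathfrak{X}}u_k$, so the final inequality uses lower semicontinuity rather than the pointwise convergence $D_{\mathfrak{X}}u_k\to D_{\mathfrak{X}}u$ you assert.
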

As in \cite[Section 4]{LL}, we have the following Harnack inequality as a consequence. 
\begin{thm}
Let $u$ be a positive viscosity $\infty(x)$-harmonic function. Let $B_r$ be a ball of radius $r>0$ contained in the bounded domain $\Omega$. Let $B_{2r}$ be the concentric ball of twice the radius also contained in $\Omega$. Then 
$$\sup_{x\in B_r} u(x) \leq C \big(\inf_{x\in B_r} u(x)+r\big)$$ for some constant $C$ 	depending on $\sup_{x\in B_{2r}}u(x)$. 
\end{thm}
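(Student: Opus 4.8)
The plan is to deduce the Harnack inequality from the preceding Lemma in two stages: first convert the Lemma into an interior Lipschitz estimate for $\ln u$ on $B_r$, and then integrate that estimate along Riemannian geodesics joining points of $B_r$ and exponentiate. This follows the structure of the Euclidean argument in \cite{LL}, the genuinely new ingredient being that \emph{admissible} curves, rather than Euclidean segments, must be used to connect points, which is where the comparability of distances enters.

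First I would extract the gradient estimate. Following the test-function analysis of \cite{LL}, one chooses a cutoff $\zeta$ adapted to the pair $B_r\subset B_{2r}$ (with $\|D_{\mathfrak{X}}\zeta\|\lesssim 1/r$) and reads the left-hand side of the Lemma as a weighted bound on $\|D_{\mathfrak{X}}\ln u\|$. The right-hand side splits into the cutoff contribution, of order $1/r$, and the exponent-gradient contribution $\zeta\ln(\zeta/u)\,D_{\mathfrak{X}}\ln\tp(x)$, which is finite because $1<\tp(x)\in C^1(\Omega)\cap W^{1,\infty}(\Omega)$ forces $\|D_{\mathfrak{X}}\ln\tp(x)\|_{L^\infty}<\infty$, and because the positivity and boundedness of $u$ on $B_{2r}$ control the scalar factor $\zeta\ln(\zeta/u)$ in terms of $\sup_{B_{2r}}u$. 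Taking $\tp(x)$-th roots then yields a pointwise bound $\|D_{\mathfrak{X}}\ln u(x)\|\leq C_1/r+C_2$ for $x\in B_r$, where $C_1$ is geometric and $C_2$ depends on $\sup_{B_{2r}}u$ and the exponent data.

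Next I would convert this into an oscillation bound. For $x,y\in B_r$ I would join them by a curve $\gamma$ whose tangents lie in $\operatorname{span}\{X_i\}$ and whose Riemannian length is comparable to $d(x,y)\leq\operatorname{diam}_d(B_r)$; because $d$ is locally comparable to the Euclidean distance — the property emphasized in Section 2 and underlying Corollary \ref{maxprincor} — this length is at most $C_0 r$. Since viscosity $\infty(x)$-harmonic functions lie in $W^{1,\infty}(\Omega)$ by Theorem \ref{exist}, $\ln u$ is locally Lipschitz, hence absolutely continuous along $\gamma$, so the fundamental theorem of calculus gives $|\ln u(x)-\ln u(y)|\leq\int_\gamma\|D_{\mathfrak{X}}\ln u\|\,ds\leq (C_1/r+C_2)\,C_0 r=C_1C_0+C_2C_0 r$. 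Exponentiating and using $e^{t}\leq 1+C't$ on the bounded range $t\in[0,C_2C_0 r]$ gives $u(x)\leq e^{C_1C_0}(1+C'C_2C_0 r)\,u(y)$; taking the supremum over $x$, the infimum over $y$, and bounding the leftover factor of $u(y)$ by $\sup_{B_{2r}}u$ produces $\sup_{B_r}u\leq C(\inf_{B_r}u+r)$ with $C$ depending on $\sup_{B_{2r}}u$.

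The main obstacle is the gradient-estimate step: one must track precisely how the exponent-gradient term $\zeta\ln(\zeta/u)\,D_{\mathfrak{X}}\ln\tp(x)$ on the right-hand side of the Lemma contributes, since it is this inhomogeneous term — absent for the pure $\infty$-Laplacian — that forces the dependence of $C$ on $\sup_{B_{2r}}u$ and, after exponentiation, produces the additive ``$+r$'' rather than a purely multiplicative Harnack inequality. The secondary difficulty, specific to this setting, is that the oscillation of $\ln u$ must be controlled along admissible curves rather than straight segments; this is exactly where the comparability of the Riemannian and Euclidean distances is used, and since that comparability is already available here, the step goes through much as in the Euclidean case.
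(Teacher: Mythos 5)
Your skeleton (a gradient estimate extracted from the Lemma, integration along admissible curves, exponentiation) is indeed the route intended here --- the paper gives no proof at all, deferring to \cite[Section 4]{LL} --- but two of your steps fail as written. First, the Lemma cannot yield the pointwise bound $\|D_{\mathfrak{X}}\ln u\|\leq C_1/r+C_2$ on $B_r$ from a single annulus cutoff: its left-hand side controls only the directional quantity $|\ip{D_{\mathfrak{X}}\zeta}{D_{\mathfrak{X}}\ln u}|$, and a cutoff adapted to the pair $B_r\subset B_{2r}$ is constant on $B_r$, so $D_{\mathfrak{X}}\zeta\equiv 0$ precisely where you want the estimate and the Lemma is vacuous there. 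The Euclidean argument avoids this by using cone-type test functions built from $|x-x_0|$, with $x_0$ ranging over the ball, whose gradients are radial of size $\sim 1/r$; then $\ip{D\zeta}{D\ln u}$ is exactly the derivative of $\ln u$ along the radial segments one subsequently integrates over. In the present setting these must be replaced by functions of the Riemannian distance $d(x_0,\cdot)$, using that $\|D_{\mathfrak{X}}d\|=1$ along the admissible geodesics, so the test function and the integration curves are coupled; a fixed cutoff plus an independently chosen curve, as in your write-up, does not produce a full-gradient (or even a usable directional) bound.

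Second --- and this is where the additive $r$ in the statement actually originates --- your claim that positivity and boundedness of $u$ control the factor $\zeta\ln(\zeta/u)$ in terms of $\sup_{B_{2r}}u$ is false: boundedness controls $\ln(\zeta/u)$ only from below, while as $u\to 0$ one has $\ln(\zeta/u)\to+\infty$, and a lower bound on $u$ is exactly what the Harnack inequality is meant to deliver, so your gradient-estimate step is circular at this point. The missing idea is that $\Delta_{\mathfrak{X},\infty(x)}$ involves only derivatives of $u$, so $u+r$ is again a positive viscosity solution; applying the Lemma to $u+r\geq r$ gives $\zeta\ln(\zeta/(u+r))\leq\ln(1/r)$, the oscillation of $\ln(u+r)$ over $B_r$ is then uniformly bounded (the curve length $\sim r$ absorbs the $1/r$ and $\ln(1/r)$ factors, with the $\sup_{B_{2r}}u$-dependence entering through the lower side of the logarithm), and exponentiation yields $\sup_{B_r}(u+r)\leq C\inf_{B_r}(u+r)$, which is the stated inequality. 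Your alternative derivation of the ``$+r$'' (expanding $e^{C_2C_0r}\leq 1+C'r$ and bounding a leftover $u(y)$ by $\sup_{B_{2r}}u$) rests on the invalid gradient bound and does not rescue the argument. The remaining ingredient of your proposal --- connecting points by admissible curves of length $\leq C_0 r$ via the local comparability of the Riemannian and Euclidean distances --- is correct and is indeed the only genuinely non-Euclidean adaptation needed.
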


\end{document}